
\documentclass[12pt]{amsart}
\usepackage{amsmath}
\usepackage{amstext}
\usepackage{amssymb}
\usepackage{amsthm}
\usepackage{amsrefs}
\usepackage{imakeidx}
\usepackage{epsfig}
\usepackage{afterpage}
\usepackage{ifthen}
\usepackage{enumitem}
\usepackage{mathtools}
\mathtoolsset{centercolon}
\usepackage{subcaption}
\usepackage{array}
\usepackage{color}
\usepackage{hyperref}
\hypersetup{
    colorlinks=true,
    linkcolor=blue,
    filecolor=magenta,      
    urlcolor=cyan,
}
\usepackage{graphicx}            
\usepackage{microtype}
\usepackage{boldline}

\makeatletter
\def\@cite#1#2{{\m@th\upshape\bfseries%
[{#1\if@tempswa{\m@th\upshape\mdseries, #2}\fi}]}}
\makeatother
%
\theoremstyle{plain}
\newtheorem{thm}{Theorem}[section]
\newtheorem{cor}[thm]{Corollary}
\newtheorem{prop}[thm]{Proposition}
\newtheorem{lem}[thm]{Lemma}

%
\theoremstyle{definition}
\newtheorem{ex}[thm]{Example}
\newtheorem{rem}[thm]{Remark}

%

%

%
\numberwithin{equation}{section}
%
\newcommand{\bC}{{\mathbb{C}}}
\newcommand{\bD}{{\mathbb{D}}}

\newcommand{\bT}{{\mathbb{T}}}

\newcommand{\A}{{\mathcal{A}}}
\newcommand{\B}{{\mathcal{B}}}
\renewcommand{\H}{{\mathcal{H}}}
\newcommand{\M}{{\mathcal{M}}}
\newcommand{\N}{{\mathcal{N}}}

\newcommand{\T}{{\mathcal{T}}}

\newcommand{\ep}{\varepsilon}
\renewcommand{\phi}{\varphi}

\newcommand{\AND}{\text{ and }}

\newcommand{\qand}{\quad\text{and}\quad}
\newcommand{\qfor}{\quad\text{for}\ }
\newcommand{\qforal}{\quad\text{for all}\ }

\newcommand{\ol}{\overline}
\newenvironment{sbmatrix}{\left[\begin{smallmatrix}}{\end{smallmatrix}\right]}
\newcommand{\ip}[1]{\langle #1 \rangle}
\newcommand{\bip}[1]{\big\langle #1 \big\rangle}

\newcommand{\id}{{\operatorname{id}}}
\newcommand{\ind}{\operatorname{ind}}

\newcommand{\rank}{\operatorname{rank}}

\newcommand{\wot}{\textsc{wot}}

\begin{document}

\title{Large Perturbations of Nest Algebras}

\author[K.R. Davidson]{Kenneth R. Davidson}
\address{Pure Mathematics Department\\
University of Waterloo\\
Waterloo, ON\; N2L--3G1\\
CANADA}
\email{krdavids@uwaterloo.ca}

\begin{abstract} 
Let $\M$ and $\N$ be nests on separable Hilbert space.
If the two nest algebras are distance less than 1 ($d(\T(\M),\T(\N)) < 1$), then the nests are distance less than 1 ($d(\M,\N)<1$).
If the nests are distance less than 1 apart, then the nest algebras are similar, 
i.e. there is an invertible $S$ such that $S\M = \N$, so that $S \T(\M)S^{-1} = \T(\N)$.
However there are examples of nests closer than 1 for which the nest algebras are distance 1 apart.
\end{abstract}

\subjclass[2020]{Primary 47L35, Secondary 47B02, 47A55}

\keywords{nest algebras, perturbations, projections, similarity}

\thanks{Author partially supported by NSERC Grant RGPIN-2018-03973.}

\maketitle

\section{Introduction}

Kadison and Kastler \cite{KK} defined a distance between two operator algebras which is equivalent to (but slightly different than) 
the Hausdorff distance between their unit balls.
They showed that close factors have the same type, and began a long story on permanence of various properties under small perturbations.

In \cite{Lance}, Lance considered close nest algebras.
He showed that the nest algebras are close if and only if the nests are close.
Then using cohomological methods, he established that if two nests are sufficiently close,
then the corresponding nest algebras are similar via a similarity close to the identity.
Once the Similarity theorem for nests was established by the author \cite{Dav_sim}, it was possible to obtain easier proofs with better constants.
In \cite{Dav_pert}, the author showed that if a nest algebra is within $\gamma < .01$ of an arbitrary norm closed operator algebra,
then the other algebra is also a nest algebra and they are similar via an invertible operator $S$ satisfying $\|S-I\| < 8\gamma$.
These constants were improved in \cite{DavNestAlgs} to $\gamma < \frac1{20}$ and $\| S-I \| < 4\gamma$.
If both algebras are nest algebras, $\gamma < \frac15$ will suffice.
It is also shown there that if there is an order isomorphism  $\theta$ between two nests such that $\| \theta - \id \| = \gamma < \frac12$,
then the two nest algebras are similar via a similarity $S$ with $\| S-I \| \le 2 \gamma$, and $S$ implements $\theta$.

Usual results on perturbations examine the case where the perturbation is small.
Here we show that we can push the boundary to the largest possible, namely distance less than 1, since any two algebras/lattices are distance at most 1 apart.
We will show that if two nest algebras are at distance less than 1, then so are the corresponding nests.
And we show that if two nests are distance less than 1, then there is a dimension-preserving order isomorphism between them.
The Similarity theorem then shows the two nest algebras are similar.
However there are nests at distance $1/\sqrt2$ such that the nest algebras are distance 1.

Kadison-Kastler \cite{KK}  inspired a large literature on perturbations of algebras, especially between von Neumann algebras or C*-algebras. 
We mention work of Christensen \cites{Chr_pertI, Chr_pertII} and a counterexample of Choi and Christensen \cite{CC}.
There is more recent work of Christensen et al \cite{CSSWW} and Cameron et al \cite{CCSSWW}, two papers representative of their recent work.
The references to these recent papers mention other work in this direction.
Barry Johnson applied cohomological methods to perturbation problems for Banach algebras in \cite{Johnson}.
Perturbations of nonself-adjoint operator algebras were considered by Lance \cite{Lance}, the author \cite{Dav_pert}, Choi and the author \cite{CD},
and Pitts \cite{Pitts}, among others.

\medskip
In this paper, all Hilbert spaces will be separable.
We recall that a nest $\N$ is a set of subspaces of a Hilbert space $\H$ which is totally ordered by inclusion, contains $\{0\}$ and $\H$,
and is complete under arbitrary intersections and closed spans of subsets of the nest. 
We will frequently identify a nest $\N$ with the set of projections $\{ P_N : N \in \N \}$.
The nest algebra $\T(\N)$ is the \wot-closed unital algebra of all operators leaving each $N\in\N$ invariant.
For $N \in \N$, set $N_+ = \bigwedge \{ N' \in \N : N < N' \}$.
When $N_+ \ne N$, the projection $A = P_{N_+} - P_N$ is called an atom of $\N$.

A map $\theta:\M \to \N$ between nests is an order isomorphism if it is an order-preserving bijection.
Say that $\theta$ preserves dimension if 
\[
 \rank( P_{M_2} - P_{M_1} ) = \rank( P_{\theta(M_2)} - P_{\theta(M_1)} ) \qforal M_1 < M_2 \in \M.
\]
One can show that $\theta$ is order-preserving precisely when it preserves the rank of each atom of $\M$.
If $S \in \B(H)$ is invertible and $S\M = \N$, the map $\theta_S(M) = SM$ is a dimension-preserving order isomorphism.
The Similarity theorem \cite{Dav_sim} states that an order isomorphism which preserves dimension is implemented
by an invertible operator $S$. Moreover, if $\ep>0$, $S$ can be chosen of the form $S = U+K$ where $U$ is unitary, $K$ is compact and $\|K\| < \ep$.

These ideas can be found in my book \cites{DavNestAlgs}.
Some of the material in this note will appear in my forthcoming book \cite{DavFAOA}, as well as a simplified proof of the Similarity theorem.

The author thanks the referee for a careful reading.

\section{Projections}

The first task is to establish Theorem~\ref{T:two pairs proj}, which can be thought of as an index result.
The method is elementary, but care is required to obtain the sharp result.

\begin{lem} \label {L:dist to pisom}
Suppose that $P$ and $Q$ are orthogonal projections such that $\|P-Q\| = \sin\theta < 1$ for $0 \le \theta < \frac\pi2$.
Let $U$ be the partial isometry in the polar decomposition of $QP$.
Then $\|U-P\| = 2 \sin\frac\theta2 < \sqrt2$.
\end{lem}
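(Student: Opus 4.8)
\medskip
\noindent\textbf{Proof proposal.}
The plan is to reduce to the standard normal form for a pair of projections and then read off $\|U-P\|$ from the $2\times2$ matrix picture. First I would pin down the structure of $U$. From the polar decomposition $QP=U(PQP)^{1/2}$, the initial projection $U^*U$ is the projection onto $\overline{\ran(PQP)}$ and the final projection $UU^*$ onto $\overline{\ran(QP)}$. Now $(P-Q)^2$ commutes with $P$ and its restriction to $\ran P$ is $P-PQP$; since $\|(P-Q)^2\|=\sin^2\theta<1$ we get $0\le P-PQP\le\sin^2\theta\,P$, so $PQP\ge(\cos^2\theta)P$ is invertible on $\ran P$, and hence $U^*U=P$. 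By the symmetric argument (using $QPQ\ge(\cos^2\theta)Q$) we get $UU^*=Q$. In particular $U=UP$, so $U-P$ vanishes on $\ran P^\perp$; moreover $QP$ restricts to the identity on $P\wedge Q$ and to $0$ on $P^\perp\wedge Q^\perp$, and hence so does $U$, so $U-P$ also vanishes on those two reducing subspaces.

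Next I would invoke the two-projections normal form. Because $\|P-Q\|<1$, the ``mixed'' corners $P\wedge Q^\perp$ and $P^\perp\wedge Q$ are trivial, so
\[
\H=(P\wedge Q)\ \oplus\ (P^\perp\wedge Q^\perp)\ \oplus\ (\mathcal K\oplus\mathcal K),
\]
with $P$ acting as $I$, $0$, $\begin{sbmatrix}I&0\\0&0\end{sbmatrix}$ on the three summands and $Q$ acting as $I$, $0$, $\begin{sbmatrix}C^2&CS\\SC&S^2\end{sbmatrix}$, where $C=\cos\Theta$, $S=\sin\Theta$ for a positive operator $\Theta$ on $\mathcal K$ with spectrum in $[0,\tfrac\pi2]$. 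If $\mathcal K=\{0\}$ then $P=Q$, $\theta=0$, $U=P$, and there is nothing to prove; so assume $\mathcal K\ne\{0\}$. Computing $\|P-Q\|$ summand by summand gives $\sin\theta=\|P-Q\|=\|S\|=\sin\|\Theta\|$, so $\|\Theta\|=\theta<\tfrac\pi2$; hence $C=\cos\Theta\ge(\cos\theta)I$ and $\|I-C\|=1-\cos\theta$.

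Finally, working on $\mathcal K\oplus\mathcal K$: $QP=\begin{sbmatrix}C^2&0\\SC&0\end{sbmatrix}$ and $(QP)^*QP=\begin{sbmatrix}C^2&0\\0&0\end{sbmatrix}$, so $(PQP)^{1/2}=\begin{sbmatrix}C&0\\0&0\end{sbmatrix}$, and since $C$ is invertible, $U=\begin{sbmatrix}C&0\\S&0\end{sbmatrix}$ (one checks $U^*U=P$ and $UU^*=Q$). As $U-P$ is $0$ on the other two summands, $\|U-P\|^2=\|(U-P)^*(U-P)\|$ equals the norm of $(C-I)^2+S^2=2(I-C)$ on $\mathcal K$, namely $2\|I-C\|=2(1-\cos\theta)=4\sin^2\tfrac\theta2$. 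Therefore $\|U-P\|=2\sin\tfrac\theta2$, which is $<2\sin\tfrac\pi4=\sqrt2$ since $\theta<\tfrac\pi2$.

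The delicate points I anticipate are all in the first two steps: checking that $U$ is genuinely a partial isometry with $U^*U=P$ and $UU^*=Q$ — this is precisely where the hypothesis $\|P-Q\|<1$ is used — and correctly identifying the normal form, in particular that the mixed corners vanish and that $\|\Theta\|=\theta$. The $2\times2$ computation and the half-angle identity $1-\cos\theta=2\sin^2\tfrac\theta2$ then finish it routinely.
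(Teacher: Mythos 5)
Your proposal is correct and follows essentially the same route as the paper: reduce to the Halmos two-subspace normal form, identify $\|S\|=\sin\theta$ so that $C\ge(\cos\theta)I$, and compute $\|U-P\|^2=\|(C-I)^2+S^2\|=\|2(I-C)\|=4\sin^2\frac\theta2$. Your extra verifications (that $U^*U=P$, $UU^*=Q$ via $PQP\ge(\cos^2\theta)P$, and that $U-P$ vanishes on the trivial summands) are sound bookkeeping that the paper handles more briefly by splitting off $P\wedge Q$ and restricting to $P\vee Q$ before invoking the generic-position model.
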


\begin{proof}
If $P \wedge Q = R \ne 0$, we can replace $P$ and $Q$ by $P-R$ and $Q-R$.
This does not affect the problem unless $P=Q$, which is a trivial case.
Let $P' = (P \vee Q) - P$. Then $P'QP$ is injective on $PH$ and has range dense in $P'H$.
There is no loss in assuming that $P+P'=I$.

Let $C = (PQP)^{1/2}$ and $S = (1-C^2)^{1/2}$.
The Halmos model \cite{Halmos} for two subspaces in generic position (i.e., $P\wedge Q = 0$ and $P \vee Q = I$) 
shows that $P$, $Q$ and $U$ are unitarily equivalent to
\[
 P = \begin{bmatrix}I&0\\0&0\end{bmatrix}
 \quad
 Q = \begin{bmatrix}C^2&CS\\CS&S^2\end{bmatrix} 
 \qand
 U = \begin{bmatrix}C&0\\ S&0\end{bmatrix} .
\]
Then since the two columns of $P-Q$ have orthogonal ranges,
\[
 \| P - Q \| = \Big\| \begin{bmatrix}S^2&-CS\\-CS&-S^2\end{bmatrix} \Big\| = \| S^2 ( S^2 +C^2) \|^{1/2} = \|S\| .
\]
So $\|S\| = \sin\theta$. Thus $C = (1-S^2)^{1/2} \ge \cos\theta I$ and $\| I - C \| = 1-\cos\theta$.
Therefore
\[
 \|U-P\| = \Big\| \begin{bmatrix}C-I&0 \\ S&0\end{bmatrix} \Big\| = \| (C-I)^2 + S^2 \|^{1/2} = \sqrt{\| 2(I-C)\|} .
\]
Hence $\|U-P\| = \sqrt{2(1-\cos\theta)} = 2 \sin \frac\theta2$.
\end{proof}

\begin{thm} \label {T:two pairs proj}
Let $P_1$ and $P_2$ $($respectively $Q_1$ and $Q_2)$ be pairwise orthogonal projections on a separable Hilbert space.
Suppose that 
\[
 \| P_1 - Q_1 \|<1 \qand  \| P_2 - Q_2 \| < 1 . 
\]
Then
\[
 \rank (P_1+P_2)^\perp = \rank (Q_1+Q_2)^\perp .
\]
\end{thm}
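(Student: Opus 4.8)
The plan is to promote the two norm hypotheses to honest partial isometries via Lemma~\ref{L:dist to pisom}, glue them using the orthogonality, and recognize the conclusion as the equality of two kernel dimensions.

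For $i=1,2$ let $U_i$ be the partial isometry in the polar decomposition of $Q_iP_i$. From the Halmos model used in the proof of Lemma~\ref{L:dist to pisom} one has $U_i^*U_i = P_i$ and $U_iU_i^* = Q_i$, while the lemma gives $\|U_i-P_i\| = 2\sin\tfrac{\theta_i}{2} < \sqrt2$, where $\sin\theta_i = \|P_i-Q_i\|$. Since $P_1P_2=0$ we get $U_1^*U_2 = (U_1^*Q_1)(Q_2U_2) = 0$, and since $Q_1Q_2=0$ we get $U_1U_2^* = (U_1P_1)(P_2U_2^*) = 0$. Hence $V := U_1+U_2$ is again a partial isometry, with $V^*V = P_1+P_2$ and $VV^* = Q_1+Q_2$, so $\ker V = \ran(P_1+P_2)^\perp$ and $\ker V^* = \ran(Q_1+Q_2)^\perp$. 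The theorem is therefore exactly the statement that $\dim\ker V = \dim\ker V^*$; when both are infinite this is trivial, so the content is the Fredholm‑index statement that arises when one of them is finite.

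The difficulty is that $\|V - (P_1+P_2)\|$ need not be less than $1$: the cross terms $\langle U_1a,b\rangle$ and $\langle a,U_2b\rangle$ are only of size $O(\sin\theta_i)$, and one checks the norm can approach $2$. So $V$ is not a small perturbation of a projection and $\dim\ker V = \dim\ker V^*$ cannot be read off cheaply; a genuine index argument is required, and this I expect to be the main obstacle. My approach would be: since $\|P_1-Q_1\|<1$ there is a unitary $W$ with $WP_1W^* = Q_1$ (for instance $W = U_1+U_1'$, where $U_1'$ is the polar part of $Q_1^\perp P_1^\perp$, again controlled by Lemma~\ref{L:dist to pisom}). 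Conjugating the whole configuration by $W$ reduces us to the case $P_1 = Q_1$, in which $P_2$ and a projection $R$ with $\rank R = \rank Q_2 = \rank P_2$ both lie under $Q_1^\perp$, and one must show $\rank(Q_1^\perp - P_2) = \rank(Q_1^\perp - R)$ inside $\ran Q_1^\perp$. A dimension count settles this whenever $\rank P_1$ or $\rank P_2$ is finite, and interchanging the roles of the two pairs leaves only the single case $\rank P_1 = \rank P_2 = \infty$.

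That last case is the heart of the matter. I would treat it by making the subspaces explicit: $\|P_i-Q_i\|<1$ forces $\ran Q_i$ to be the graph of a bounded operator $A_i$ over $\ran P_i$, the orthogonality $Q_1 \perp Q_2$ becomes a single linear relation between the blocks of $A_1$ and $A_2$, and $\dim\ker V^* = \rank(Q_1+Q_2)^\perp$ becomes the dimension of the solution space of an explicit inhomogeneous linear system. The subtlety — and, I suspect, the reason "care is required to obtain the sharp result" with hypothesis merely $<1$ — is that the operator governing that system need not be invertible when $\theta_1+\theta_2$ is large, so naive norm estimates do not suffice; one has to use the orthogonality relation to see that, whenever invertibility fails, the relevant right‑hand side lies in the range, so that the dimension comes out correctly anyway. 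Making that cancellation precise is where I would expect to spend the real effort.
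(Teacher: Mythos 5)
Your first paragraph reproduces the paper's setup exactly: the polar parts $U_i$ of $Q_iP_i$ satisfy $U_i^*U_i=P_i$ and $U_iU_i^*=Q_i$, the cross terms vanish by orthogonality, and $V=U_1+U_2$ is a partial isometry with $V^*V=P_1+P_2$ and $VV^*=Q_1+Q_2$, so the theorem reduces to $\ind V=0$ once one of the two co-ranks is assumed finite. The gap is that you then abandon the index computation as "the main obstacle." The point you are missing is that the relevant threshold is $2$, not $1$. Lemma~\ref{L:dist to pisom} gives $\|U_i-P_i\|=2\sin\frac{\theta_i}2<\sqrt2$, and since $(U_1-P_1)P_1$ and $(U_2-P_2)P_2$ have orthogonal domains,
\[
 \|V-(P_1+P_2)\|^2 \le \|U_1-P_1\|^2+\|U_2-P_2\|^2 < 2+2=4 ,
\]
so $\|V-(P_1+P_2)\|<2$. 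You observe that this norm "can approach $2$" and discard the estimate as useless, but strict inequality is exactly what is needed: if $\rank(P_1+P_2)^\perp<\infty$ and $\ind V\ne0$, then $\pi(V)$ is an essential isometry of nonzero index, so $\sigma_e(V)\supseteq\bT$ and $\|V-(P_1+P_2)\|_e=\|\pi(V-I)\|=2$, contradicting the bound above. That three-line argument is the paper's entire proof, and it is precisely why Lemma~\ref{L:dist to pisom} is stated with the sharp constant $2\sin\frac\theta2<\sqrt2$ rather than a cruder bound.

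The alternative route you sketch does not close the argument. After conjugating by a unitary $W$ with $WP_1W^*=Q_1$, all that survives of the second hypothesis is $\rank(WP_2W^*)=\rank Q_2$, and equality of ranks of two subprojections of $Q_1^\perp$ does not determine equality of their co-ranks when both ranks are infinite --- which is exactly the one case your dimension count does not cover. For that case you fall back on an explicit graph/linear-system analysis whose decisive step (that the relevant right-hand side lies in the range whenever invertibility fails) you explicitly leave unproved. So the hardest case of the theorem is not established, and the machinery you would need to build for it is replaced in the paper by the essential-spectrum observation above.
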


\begin{proof}
We may suppose that $ \rank (P_1+P_2)^\perp \le \rank (Q_1+Q_2)^\perp$.
If both are infinite, there is nothing to prove.
Thus we may assume that $ \rank (P_1+P_2)^\perp$ is finite.
Form the polar decomposition 
\[
 Q_iP_i = U_i (P_iQ_iP_i)^{1/2} \qfor  i=1,2 .
\]
Then $U = U_1+U_2$ is a partial isometry of $(P_1+P_2)H$ onto $(Q_1+Q_2)H$.
Thus $U$ is left semi-Fredholm since it has finite nullity and closed range.
However by the lemma,
\begin{align*}
 \| U - (P_1+P_2) \|^2 &= \big\| \big[ (U_1-P_1)P_1\ \ \ (U_2-P_2)P_2  \big]  \big\|^2 \\&
 \le \|U_1-P_1\|^2  + \|U_2-P_2\|^2 < 4 
\end{align*}
because the two terms in the $1 \times 2 $ matrix have orthogonal domains.
If $\rank (Q_1+Q_2)^\perp = \infty$, then $\pi(U)$ would be a proper isometry, and thus $\sigma_e(U) = \ol{\bD}$ is the closed unit disc.
If $\rank (Q_1+Q_2)^\perp < \infty$, then $\pi(U)$ is unitary.
If also $\ind U \ne 0$, then $\sigma_e(U) = \ol{\bT}$ is the unit circle.
In either case,
\[
 \|U-(P_1+P_2)\|_e = \|\pi(U - I) \| = 2 .
\]
Hence $\ind U = 0$, which means  $\rank (P_1+P_2)^\perp = \rank (Q_1+Q_2)^\perp$.
\end{proof}

\begin{cor} \label {C:closest proj}
If $P$, $Q_1$ and $Q_2$ are projections with $Q_1 < Q_2$, then 
\[
\max\{ \|P - Q_i \| : i=1,2 \} = 1 .
\]
\end{cor}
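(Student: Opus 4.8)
The plan is to argue by contradiction and feed a cleverly chosen pair of pairs of projections into Theorem~\ref{T:two pairs proj}. First note that for any two projections $A,B$ one has $A-B \le A \le I$ and $A-B \ge -B \ge -I$, so $\|A-B\| \le 1$; hence automatically $\max\{\|P-Q_i\|:i=1,2\} \le 1$, and it suffices to rule out the possibility that $\|P-Q_1\|<1$ and $\|P-Q_2\|<1$ hold simultaneously.

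So suppose both strict inequalities hold. The key observation is that passing to the complementary projections $I-P$ and $I-Q_2$ converts the comparison $Q_1 < Q_2$ into an orthogonality relation, while leaving the relevant norms unchanged. Concretely, set
\[
 P_1 = P, \quad P_2 = I-P, \quad Q_1' = Q_1, \quad Q_2' = I-Q_2 .
\]
Then $P_1 \perp P_2$ trivially, and $Q_1' \perp Q_2'$ because $Q_1 \le Q_2$ gives $Q_1(I-Q_2) = Q_1 - Q_1 Q_2 = 0$. Moreover $\|P_1 - Q_1'\| = \|P-Q_1\| < 1$, and since $(I-P)-(I-Q_2) = Q_2 - P$ we also get $\|P_2 - Q_2'\| = \|P-Q_2\| < 1$. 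Thus the hypotheses of Theorem~\ref{T:two pairs proj} are met.

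The theorem then yields $\rank(P_1+P_2)^\perp = \rank(Q_1'+Q_2')^\perp$. But $P_1 + P_2 = I$, so the left side is $0$, while $Q_1' + Q_2' = Q_1 + (I-Q_2) = I - (Q_2 - Q_1)$, so the right side is $\rank(Q_2 - Q_1) \ge 1$ because $Q_1 < Q_2$ is strict. This contradiction forces $\max\{\|P-Q_i\|:i=1,2\} \ge 1$, and together with the trivial upper bound we conclude equality.

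I expect no computational difficulty here; the whole content is the reduction, i.e.\ recognizing that replacing $P$ by $I-P$ and $Q_2$ by $I-Q_2$ simultaneously (a) preserves the two small distances and (b) turns "$Q_1$ below $Q_2$" into "$Q_1$ orthogonal to $I-Q_2$," which is exactly the orthogonality needed to apply the index-type result of Theorem~\ref{T:two pairs proj}.
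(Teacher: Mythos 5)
Your proof is correct and is essentially identical to the paper's: both argue by contradiction, apply Theorem~\ref{T:two pairs proj} to the pairs $P, P^\perp$ and $Q_1, Q_2^\perp$, and derive the contradiction $0 = \rank(Q_2-Q_1) > 0$. The only addition is your explicit verification of the trivial upper bound $\|P-Q_i\|\le 1$, which the paper leaves implicit.
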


\begin{proof}
If this result were false, we could have $\|P - Q_i \| < 1$ for $i=1,2$.
Using $P$, $P^\perp$, $Q_1$ and $Q_2^\perp$ in Theorem~\ref{T:two pairs proj}, we would obtain
\[
 0 = \rank(P+P^\perp)^\perp = \rank(Q_1+Q_2^\perp)^\perp = \rank (Q_2 - Q_1) > 0.
\]
This contradiction establishes the result.
\end{proof}

Now we examine the difference between two projections.

\begin{prop} \label {P:dist betw projs}
Let $P$ and $Q$ be projections on a Hilbert space $\H$.
Then 
\[
 \| P - Q \| = \max \big\{ \|PQ^\perp\|, \| P^\perp Q \| \big\} .
\]
Moreover if $\|P-Q\|<1$, then $\|PQ^\perp\| = \| P^\perp Q \|$.
\end{prop}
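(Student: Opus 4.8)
The plan is to handle the two statements separately. For the equality $\|P-Q\|=\max\{\|PQ^\perp\|,\|P^\perp Q\|\}$, I would start from the fact that $P-Q$ is self-adjoint, so $\|P-Q\|^2=\|(P-Q)^2\|$. Expanding $(P-Q)^2=P+Q-PQ-QP$, a one-line computation shows that this operator commutes with $P$, since both $P(P-Q)^2$ and $(P-Q)^2P$ equal $P-PQP$. Hence $(P-Q)^2=P(P-Q)^2P+P^\perp(P-Q)^2P^\perp$ with the two pieces supported on orthogonal subspaces, so
\[
 \|P-Q\|^2=\max\big\{\,\|P(P-Q)^2P\|,\ \|P^\perp(P-Q)^2P^\perp\|\,\big\}.
\]
Now $P(P-Q)^2P=P-PQP=PQ^\perp P$ and $P^\perp(P-Q)^2P^\perp=P^\perp QP^\perp$, and since $\|PQ^\perp P\|=\|PQ^\perp\|^2$ and $\|P^\perp QP^\perp\|=\|P^\perp Q\|^2$, taking square roots gives the claim.

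For the "moreover" statement, suppose $\|P-Q\|<1$ and put $A=QP$, so $A^*A=PQP$ and $AA^*=QPQ$. The crucial use of the hypothesis is to check that $PQP$ compressed to $\ran P$ is bounded below: for $x\in\ran P$ one has $Q^\perp x=Q^\perp(P-Q)x$, hence $\langle PQPx,x\rangle=\|Qx\|^2=\|x\|^2-\|Q^\perp x\|^2\ge(1-\|P-Q\|^2)\|x\|^2$. The symmetric estimate shows $QPQ$ compressed to $\ran Q$ is bounded below. Since $A^*A$ and $AA^*$ always have the same spectrum away from $0$, the spectra of $PQP|_{\ran P}$ and $QPQ|_{\ran Q}$ therefore coincide, and in particular so do their minima. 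Finally, because $0\le PQP|_{\ran P}\le I$,
\[
 \|PQ^\perp\|^2=\|PQ^\perp P\|=\|P-PQP\|=1-\min\sigma\big(PQP|_{\ran P}\big),
\]
and symmetrically $\|P^\perp Q\|^2=1-\min\sigma\big(QPQ|_{\ran Q}\big)$, so the two norms agree.

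I expect the only real obstacle to be making the strict inequality $\|P-Q\|<1$ do the work in the second part: without it, $0$ might lie in the spectrum of $PQP|_{\ran P}$ but not in that of $QPQ|_{\ran Q}$ — exactly the phenomenon behind the genuine inequality when, say, $P\ne 0$ and $Q=0$ — and then the comparison of $A^*A$ with $AA^*$ would give nothing. A second route, valid for both parts, is to run everything through the Halmos two-subspace model used in Lemma~\ref{L:dist to pisom}: on the generic summand $\|PQ^\perp\|$, $\|P^\perp Q\|$ and $\|P-Q\|$ all equal $\|S\|$, while $\|P-Q\|<1$ forces the summands $P\wedge Q^\perp$ and $P^\perp\wedge Q$ — the only ones on which $\|PQ^\perp\|$ and $\|P^\perp Q\|$ can differ — to vanish.
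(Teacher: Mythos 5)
Your proof is correct, but it takes a genuinely different route from the paper's. For the first equality, the paper simply writes $P-Q = PQ^\perp - P^\perp Q$ and observes that these two summands have mutually orthogonal initial spaces and mutually orthogonal ranges, so the norm of the difference is the maximum of the norms; your argument instead squares $P-Q$, checks that $(P-Q)^2$ commutes with $P$, and splits it as $PQ^\perp P \oplus P^\perp Q P^\perp$ --- the same conclusion by a slightly longer but equally elementary computation. For the ``moreover'' part, the paper invokes the Halmos canonical form for a pair of projections, computes $PQ^\perp$ and $P^\perp Q$ explicitly on each summand, and notes that $\|P-Q\|<1$ kills the summands $\H_{10} = P\wedge Q^\perp$ and $\H_{01} = P^\perp\wedge Q$ on which the two norms could differ. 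Your argument avoids the structure theorem entirely: the estimate $\|Q^\perp x\| = \|Q^\perp(P-Q)x\| \le \|P-Q\|\,\|x\|$ for $x\in\ran P$ shows $PQP|_{\ran P}$ is bounded below, the identity $\sigma(A^*A)\setminus\{0\} = \sigma(AA^*)\setminus\{0\}$ for $A=QP$ then transfers the full spectrum (not just the nonzero part, since $0$ is excluded from both restrictions), and the formula $\|PQ^\perp\|^2 = 1-\min\sigma(PQP|_{\ran P})$ finishes it. This is a self-contained, model-free proof; what the Halmos model buys in exchange is the finer information recorded at the end of the paper's proof, namely exactly when and how the two norms $\|PQ^\perp\|$ and $\|P^\perp Q\|$ can differ when $\|P-Q\|=1$. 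The only loose end in your write-up is the degenerate case $P=0$ or $Q=0$ (where $\ran P$ or $\ran Q$ is trivial and the minimum of the spectrum is not defined); under the hypothesis $\|P-Q\|<1$ these occur only simultaneously, and then both norms vanish, so the gap is cosmetic. Your closing remark correctly identifies the second route as the one the paper actually takes.
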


\begin{proof}
First of all, $P-Q = PQ^\perp - P^\perp Q$ is the difference of two operators with pairwise orthogonal domains and ranges.
Thus 
\[
 \| P - Q \| = \max \big\{ \|PQ^\perp\|, \| P^\perp Q \| \big\} .
\]

We make use of the general form for a pair of projections \cite{Halmos}. 
There is a canonical decomposition $\H = \H_{00} \oplus \H_{10} \oplus \H_{01} \oplus \H_{11} \oplus \H_2 \oplus \H_2$
and positive injective operators $C,S \in \B(H_2)$ such that $C^2+S^2 = I$ and
\[
 P = 0 \oplus I \oplus 0 \oplus I \oplus \begin{bmatrix} I & 0 \\0 & 0\end{bmatrix} 
 \ \AND \ 
 Q = 0 \oplus 0 \oplus I \oplus I \oplus \begin{bmatrix} C^2 & CS \\CS & S^2\end{bmatrix} .
\]
Any of the spaces $\H_{ij}$ and $\H_2$ can be vacuous.
It follows that 
\[
 P Q^\perp = 0 \oplus I \oplus 0 \oplus 0 \oplus \begin{bmatrix} S^2 & -CS \\0 & 0\end{bmatrix} 
 \ \AND \ 
 P^\perp Q = 0 \oplus 0 \oplus I \oplus 0 \oplus \begin{bmatrix} 0 & 0 \\CS & S^2\end{bmatrix} .
\]
Now 
\[
 \left\| \begin{bmatrix} S^2 & -CS \\0 & 0\end{bmatrix} \right\| =
 \left\| \begin{bmatrix} 0 & 0 \\CS & S^2\end{bmatrix} \right\| =
 \| S^2 ( S^2+C^2) \|^{1/2} = \|S\|. 
\]

If $\| P - Q \| < 1$, then $\H_{10}$ and $\H_{01}$ must be vacuous.
Hence 
\[
 \|PQ^\perp\| = \| P^\perp Q \| = \|S\| .
\]
However if either $\H_{10}$ or $\H_{01}$ is non-trivial, then $\|PQ^\perp\| = 1$ or $\|PQ^\perp\| = 1$, respectively.
If say $\H_{10} \ne \{0\}$ and $\H_{01} = \{0\}$, then $\|PQ^\perp\|=1$ and $\|P^\perp Q\| = \|S\|$.
So if $\|S\|<1$, the two norms differ. It is similar if $\H_{10} = \{0\}$ and $\H_{01} \ne \{0\}$.
\end{proof}

\section{Distance between nests}

We define the distance between two nests to be the Hausdorff distance between the sets of corresponding orthogonal projections.
If $\M$ and $\N$ are nests, set
\[
 d(\M, \N) = \max \Big\{ \sup_{N \in \N} \inf_{M\in \M} \| P_N - P_M \|,\ \sup_{M\in \M} \inf_{N \in \N} \| P_N - P_M \| \Big\} .
\]

\begin{thm} \label {T:close nests}
Suppose that $\M$ and $\N$ are two nests on $\H$ such that $d(\M, \N) = \gamma < 1$.
Then there is a unique order isomorphism $\theta : \M \to \N$ such that $\| \theta - \id \| = \gamma$.
Moreover $\theta$ preserves dimension.
\end{thm}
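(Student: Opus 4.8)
The plan is to build $\theta$ explicitly by sending each $M \in \M$ to the (necessarily unique) nearest element of $\N$, verify it is a well-defined order isomorphism, check that it preserves dimension using Theorem~\ref{T:two pairs proj}, and finally argue uniqueness. Throughout, the key tool is Corollary~\ref{C:closest proj}: since $\gamma < 1$, for each $M \in \M$ there is $N \in \N$ with $\|P_M - P_N\| < 1$, and this $N$ is unique, for if $N_1 < N_2$ both satisfied this we would contradict Corollary~\ref{C:closest proj}. So define $\theta(M)$ to be that unique $N$. By the symmetric role of $\M$ and $\N$ in $d(\M,\N)$, there is likewise a map $\psi:\N \to \M$ sending each $N$ to its unique nearby $M$; since $\|P_M - P_{\theta(M)}\| < 1$ forces $\psi(\theta(M)) = M$ and vice versa, $\theta$ and $\psi$ are mutually inverse bijections.

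Next I would check $\theta$ is order-preserving. Suppose $M_1 < M_2$ in $\M$ but $\theta(M_2) < \theta(M_1)$ (they cannot be equal, since $\theta$ is injective). Then with $N_1 = \theta(M_1)$, $N_2 = \theta(M_2)$ we have $N_2 < N_1$, $\|P_{M_1} - P_{N_1}\| < 1$ and $\|P_{M_2} - P_{N_2}\| < 1$. Apply Theorem~\ref{T:two pairs proj} to the pairwise orthogonal projections $P_{M_1}, P_{M_2}^\perp$ and $Q_1 = P_{N_1}, Q_2 = P_{N_2}^\perp$: we get $\rank(P_{M_1} + P_{M_2}^\perp)^\perp = \rank(P_{N_1} + P_{N_2}^\perp)^\perp$, i.e.\ $\rank(P_{M_2} - P_{M_1}) = \rank(P_{N_1} - P_{N_2})$. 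But $M_1 < M_2$ while $N_2 < N_1$, so the left side is the rank of a nonzero positive projection difference and the right side is the rank of $P_{N_1}-P_{N_2}$, also positive — the signs are inconsistent with $\|P_{M_1}-P_{N_1}\|<1$ and $\|P_{M_2}-P_{N_2}\|<1$ being simultaneously less than $1$ unless one forces the other space to be $\{0\}$; more cleanly, I would instead argue directly: $\|P_{M_1} - P_{N_2}\| \le \|P_{M_1} - P_{M_2}\| $-type estimates together with Corollary~\ref{C:closest proj} applied to $P_{M_1}$ against $N_2 < N_1$ give the contradiction. Dimension preservation is then the same Theorem~\ref{T:two pairs proj} computation applied to $M_1 < M_2$ and $N_1 = \theta(M_1) < N_2 = \theta(M_2)$: taking $P_1 = P_{M_1}$, $P_2 = P_{M_2}^\perp$, $Q_1 = P_{N_1}$, $Q_2 = P_{N_2}^\perp$ (pairwise orthogonal, with $\|P_i - Q_i\| < 1$) yields $\rank(P_{M_2} - P_{M_1}) = \rank(P_{N_2} - P_{N_1})$, which is exactly dimension preservation.

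For the norm identity $\|\theta - \id\| = \gamma$: by definition of $\theta$, $\sup_M \|P_M - P_{\theta(M)}\| \le \sup_M \inf_N \|P_M - P_N\| \le \gamma$, and since $\theta$ is a bijection with inverse $\psi$ of the same type, $\sup_N \|P_{\psi(N)} - P_N\| \le \gamma$ as well, so $\|\theta - \id\| \le \gamma$. Conversely, each $\inf_N \|P_M - P_N\|$ is attained at $N = \theta(M)$ (any other $N$ is at distance $1 \ge \|P_M - P_{\theta(M)}\|$), so $\sup_M \inf_N \|P_M - P_N\| = \sup_M \|P_M - P_{\theta(M)}\| \le \|\theta - \id\|$; symmetrically for the other sup; hence $\gamma \le \|\theta - \id\|$. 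Uniqueness of $\theta$ follows because any order isomorphism $\theta'$ with $\|\theta' - \id\| < 1$ must satisfy $\|P_M - P_{\theta'(M)}\| < 1$, and the nearby element of $\N$ is unique, so $\theta' = \theta$.

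The main obstacle, I expect, is not any single hard estimate but the careful bookkeeping in the order-preservation step: one must rule out \emph{all} the ways $\theta$ could reverse or collapse order, and the cleanest route is to reduce each case to Corollary~\ref{C:closest proj} (a single projection cannot be within distance $1$ of two strictly comparable ones). A secondary subtlety is checking $\theta$ is defined on \emph{all} of $\N$ and genuinely surjective — i.e.\ that the completeness of the nests (closure under intersections and closed spans) is respected — but this is automatic once $\theta$ and $\psi$ are shown to be mutually inverse on the underlying sets of projections, since order isomorphisms of complete lattices automatically preserve arbitrary meets and joins.
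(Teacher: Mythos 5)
Your overall strategy is the same as the paper's: define $\theta(M)$ as the unique $N\in\N$ with $\|P_M-P_N\|<1$ (uniqueness from Corollary~\ref{C:closest proj}), get bijectivity by symmetry, and obtain dimension preservation from Theorem~\ref{T:two pairs proj} applied to the orthogonal pairs $P_{M_1},P_{M_2}^\perp$ and $P_{N_1},P_{N_2}^\perp$. Your treatment of the norm identity $\|\theta-\id\|=\gamma$ and of uniqueness is correct and in fact more detailed than the paper, which leaves those points implicit.

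The step that does not go through as written is order preservation. Your first argument fails outright: if you assume $N_2=\theta(M_2)<N_1=\theta(M_1)$, then $Q_1=P_{N_1}$ and $Q_2=P_{N_2}^\perp$ are \emph{not} orthogonal (indeed $Q_1Q_2=P_{N_1}-P_{N_2}\ne 0$), so the hypotheses of Theorem~\ref{T:two pairs proj} are violated and no ``sign contradiction'' can be extracted from its conclusion. Your fallback names the right tool (Corollary~\ref{C:closest proj} applied to $P_{M_1}$ against the comparable pair $N_2<N_1$), but the estimate you gesture at, $\|P_{M_1}-P_{N_2}\|\le\|P_{M_1}-P_{M_2}\|$, is useless because $\|P_{M_1}-P_{M_2}\|=1$ whenever $M_1<M_2$. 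The missing ingredient is: $P_{M_1}P_{N_2}^\perp=P_{M_1}P_{M_2}P_{N_2}^\perp$ has norm at most $\|P_{M_2}P_{N_2}^\perp\|\le\|P_{M_2}-P_{N_2}\|<1$, and $P_{M_1}^\perp P_{N_2}=P_{M_1}^\perp P_{N_1}P_{N_2}$ has norm at most $\|P_{M_1}^\perp P_{N_1}\|\le\|P_{M_1}-P_{N_1}\|<1$; hence by Proposition~\ref{P:dist betw projs}, $\|P_{M_1}-P_{N_2}\|<1$, so $P_{M_1}$ is within distance less than $1$ of both $N_2$ and $N_1$ with $N_2<N_1$, contradicting Corollary~\ref{C:closest proj}. (The paper itself dismisses this step as ``evident,'' so you are in good company in underestimating it, but as written your proof of order preservation is incomplete.) Note also that dimension preservation genuinely needs order preservation first, since otherwise $P_{N_1}$ and $P_{N_2}^\perp$ need not be orthogonal; with the fix above, the rest of your argument matches the paper.
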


\begin{proof}
For each $M\in \M$, there is an $N \in \N$ with $\| P_M - P_N \| \le \gamma$.
By Corollary~\ref{C:closest proj}, the subspace $N$ is unique.
Define $\theta(M) = N$.
Reversing the roles of $\M$ and $\N$ shows that $\theta$ is a bijection.
It is evident that this map must preserve order, so $\theta$ is an order isomorphism.

Suppose that $M_1 < M_2$, $P_i := P_{M_i}$ and $Q_i = P_{\theta(M_i)}$ for $i=1,2$.
Then $\|P_1 - Q_1\| < 1$, $\| P_2^\perp - Q_2^\perp\| = \|P_2 - Q_2 \| < 1$, $P_1$ and $P_2^\perp$ are orthogonal 
and $Q_1 < Q_2$, so that $Q_1$ and $Q_2^\perp$ are orthogonal.
Hence by Theorem~\ref{T:two pairs proj}, 
\begin{align*}
 \rank(P_2-P_1) &= \rank (P_1+P_2^\perp)^\perp \\& = \rank (Q_1+Q_2^\perp)^\perp = \rank(Q_2-Q_1) . 
\end{align*}
This means that $\theta$ preserves dimension.
\end{proof}

The following result was established by Lance when $\gamma$ is sufficiently small.
In \cite{DavNestAlgs}*{Lemma 18.3}, this was extended to $\gamma < \frac12$.
Here we see that $\gamma<1$ is the sharp dividing point.

\begin{cor}
Suppose that $\theta : \M \to \N$ is an order isomorphism of two nests on separable Hilbert space.
If 
\[
 \|\theta-\id\| := \sup_{M\in\M} \| P_{\theta(M)} - P_M \| = \gamma < 1 ,
\]
then there is an invertible operator $S$ 
such that $SM = \theta(M)$ for all $M \in \M$.
If $\gamma < \frac12$, one can take $\|S-I\| < 2\gamma$.
If $\gamma \ge \frac12$, one can take $\|S-I\| < 2 + \ep$ for any $\ep>0$.
\end{cor}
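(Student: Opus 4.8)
The plan is to deduce this from Theorem~\ref{T:close nests} and the Similarity theorem. The earlier results quoted just before the statement already handle the range $\gamma<\frac12$ with the constant $2\gamma$, so the only genuinely new point here is to widen the admissible range all the way to $\gamma<1$, at the price of a weaker constant.

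First I would check that the given order isomorphism $\theta$ \emph{preserves dimension}, by repeating the index computation from the proof of Theorem~\ref{T:close nests}. Fix $M_1<M_2$ in $\M$ and put $P_i=P_{M_i}$, $Q_i=P_{\theta(M_i)}$. Since $M_1\subseteq M_2$ and $\theta(M_1)\subseteq\theta(M_2)$, the pairs $P_1,P_2^\perp$ and $Q_1,Q_2^\perp$ are each orthogonal, while $\|P_1-Q_1\|\le\gamma<1$ and $\|P_2^\perp-Q_2^\perp\|=\|P_2-Q_2\|\le\gamma<1$; so Theorem~\ref{T:two pairs proj} gives $\rank(P_2-P_1)=\rank(P_1+P_2^\perp)^\perp=\rank(Q_1+Q_2^\perp)^\perp=\rank(Q_2-Q_1)$. (Equivalently, one notes $d(\M,\N)\le\gamma<1$, and that by Corollary~\ref{C:closest proj} a nest contains at most one projection within distance $1$ of a given projection, so $\theta$ must coincide with the order isomorphism furnished by Theorem~\ref{T:close nests}, for which dimension preservation is already asserted.) With dimension preservation in hand, the Similarity theorem \cite{Dav_sim} produces an invertible $S$ with $SM=\theta(M)$ for all $M\in\M$, and, for any prescribed $\ep>0$, one of the special form $S=U+K$ with $U$ unitary, $K$ compact, $\|K\|<\ep$.

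It then remains only to read off the norm estimates. For $\gamma\ge\frac12$ I would use nothing more than the elementary bound $\|U-I\|=\max\{|\mu-1|:\mu\in\sigma(U)\}\le 2$ for a unitary $U$, so that $\|S-I\|\le\|U-I\|+\|K\|<2+\ep$. For $\gamma<\frac12$ the sharper conclusion $\|S-I\|<2\gamma$ is not new: it is exactly \cite{DavNestAlgs}*{Lemma~18.3} (the extension of Lance's perturbation theorem), which supplies an $S$ implementing $\theta$ with $\|S-I\|\le2\gamma$, and a harmless arbitrarily small adjustment makes the inequality strict; so in that range I would simply cite it. I do not expect a serious obstacle: the analytic content lives in Theorem~\ref{T:two pairs proj} (dimension preservation for the entire range $\gamma<1$) and in the Similarity theorem (the implementing $S$ in the soft form $U+K$), both of which are available; the only point requiring a moment's care is the identification of the given $\theta$ with the canonical order isomorphism of Theorem~\ref{T:close nests} via the uniqueness in Corollary~\ref{C:closest proj}, and one should not hope to improve on the essentially trivial constant $2+\ep$ once $\gamma$ is permitted to approach $1$.
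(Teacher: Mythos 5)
Your proposal is correct and follows essentially the same route as the paper: deduce dimension preservation from Theorem~\ref{T:close nests} (via Theorem~\ref{T:two pairs proj}), invoke the Similarity theorem in the form $S=U+K$ with $\|K\|<\ep$, cite \cite{DavNestAlgs}*{Lemma 18.3} for $\gamma<\frac12$, and use the trivial bound $\|U-I\|\le 2$ for $\gamma\ge\frac12$. Your extra care in identifying the given $\theta$ with the canonical order isomorphism of Theorem~\ref{T:close nests} via the uniqueness from Corollary~\ref{C:closest proj} is a point the paper's proof leaves implicit, but it is not a different argument.
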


\begin{proof}
By Theorem~\ref{T:close nests}, $\theta$ preserves dimension.
Hence by the Similarity theorem \cite{Dav_sim}, $\theta$ is implemented by a similarity. 
(See the introduction for some discussion of this.)
When $\gamma < \frac12$, \cite{DavNestAlgs}*{Lemma 18.3} shows that one can choose $S$ so that $\| S - I \| < 2 \gamma$.
For $\gamma \ge \frac12$, one can take $S = U+K$ with $U$ unitary and $\|K\| < \ep < \frac12$.
Hence 
\[
 \|S-I \| \le \| U \| + \| K \| + \| I \|  < 2 + \ep < 5 \gamma . \qedhere
\]
\end{proof}

\section{Distance between nest algebras}

Kadison and Kastler \cite{KK} defined the distance between two operator algebras $\A$ and $\B$ as
\[
 d(\A, \B) = \max \Big\{ \sup_{A \in b_1(\A)} \inf_{B \in \B} \|A-B\|, \sup_{B \in b_1(\B)} \inf_{A \in \A} \|A-B\| \Big\} .
\]
This is slightly different than the Hausdorff distance between the two unit balls, 
but they are comparable within a factor of 2.
We have a mild preference for this metric due to the Arveson distance formula \cite{Arv}: 
\[
 d(T, \T(\N) ) = \sup_{N\in\N} \| P_N^\perp T P_N \| \qfor T\in\B(\H) .
\]
See Power \cite{Power} and Lance \cite{Lance} for other proofs.
In any case, 
\[
 0 \le d(\A,\B) \le d_H(b_1(\A), b_1(\B)) \le 1 .
\]
So if $d(\A,\B) = 1$, then $d_H(\A,\B) = 1$ as well.

Lance \cite{Lance} showed that if two nest algebras are sufficiently close, then the nests are close and vice versa.
Using the Similarity theorem, the author quantified this in \cite{Dav_pert}.
The best known constants \cite{DavFAOA}*{Theorem 13.10.11} are

\begin{thm}[Lance, Davidson] \label {T:lance}
Let $\M,\N$ be nests on separable Hilb\-ert space $\H$.
If $d(\T(\M), \T(\N)) = \gamma < \frac15$, then $\T(\M)$ and $\T(\N)$ are similar via an invertible operator $S$
satisfying $\| S - I \| < 4\gamma$.
\end{thm}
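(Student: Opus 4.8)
The plan is to reduce Theorem~\ref{T:lance} to results already in hand. There are two steps: first, show that closeness of the nest algebras forces the nests themselves to be close; second, apply Theorem~\ref{T:close nests} together with its Corollary (the quantified Similarity theorem) to turn a close pair of nests into a similarity close to the identity. The first step is Lance's observation \cite{Lance}, made quantitative in \cite{Dav_pert}; the real content here is getting the constants sharp.

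For the first step I would use Arveson's distance formula \cite{Arv}. Fix $N \in \N$. Any $T \in \T(\M)$ with $\|T\| \le 1$ lies within $\gamma$ of some $T' \in \T(\N)$, and $P_N^\perp T' P_N = 0$ because $T'$ leaves $N$ invariant; hence $\|P_N^\perp T P_N\| \le \gamma$. Therefore
\[
 \sup\bigl\{\, \|P_N^\perp T P_N\| : T \in \T(\M),\ \|T\| \le 1 \,\bigr\} \le \gamma ,
\]
and the symmetric inequality holds with $\M$ and $\N$ interchanged. The supremum on the left is a defect that vanishes precisely when $N \in \M$, and the key point is that it is bounded below by a fixed increasing function of $\inf_{M \in \M}\|P_N - P_M\|$: if $P_N$ is at distance $\delta$ from the nest $\M$, one can exhibit $T \in \T(\M)$ with $\|T\| \le 1$ and $\|P_N^\perp T P_N\|$ comparable to $\delta$, the extreme case being when $\M$ has an atom that $N$ cuts through. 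Combining this with the displayed bound gives $\inf_{M \in \M}\|P_N - P_M\| \le g(\gamma)$ for every $N \in \N$, and symmetrically for every $M \in \M$, so $d(\M,\N) \le g(\gamma)$, where $g$ is explicit and increasing with $g(\gamma) \to 0$ as $\gamma \to 0$ and $g(\gamma) < \tfrac12$ whenever $\gamma < \tfrac15$.

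Now apply Theorem~\ref{T:close nests}: since $d(\M,\N) \le g(\gamma) < 1$, there is a dimension-preserving order isomorphism $\theta : \M \to \N$ with $\|\theta - \id\| = d(\M,\N) \le g(\gamma)$. Because $g(\gamma) < \tfrac12$, the Corollary to that theorem (\cite{DavNestAlgs}*{Lemma 18.3}) provides an invertible $S$ implementing $\theta$, so $S\M = \N$ and $\|S - I\| < 2\|\theta - \id\| \le 2g(\gamma)$. Since $S$ carries the invariant subspace lattice of $\T(\M)$ onto that of $\T(\N)$, we get $S\,\T(\M)\,S^{-1} = \T(\N)$. The restriction $\gamma < \tfrac15$ is precisely what makes the chain close up: it keeps $g(\gamma)$ below $\tfrac12$, so that the sharp estimate $\|S - I\| < 2\|\theta - \id\|$ is available rather than the weaker bound valid for larger perturbations, and it makes $2g(\gamma) < 4\gamma$, yielding $\|S - I\| < 4\gamma$.

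The main obstacle is the quantitative comparison in the second paragraph: bounding $\inf_{M}\|P_N - P_M\|$ above in terms of the defect $\sup_{T}\|P_N^\perp T P_N\|$ with a good enough constant. A soft argument using only the projections $P_M$ in $\T(\M)$ is too weak, since an atom mismatch between $\M$ and $\N$ can be invisible to projections while still pushing the two algebras far apart; one must instead feed genuinely non-self-adjoint elements of $\T(\M)$, supported near atoms of $\M$, into the distance formula, and then analyze the resulting pair of subspaces using Proposition~\ref{P:dist betw projs} and Corollary~\ref{C:closest proj}. Extracting a constant sharp enough that, after the factor $2$ from the Similarity estimate, the final bound is exactly $4\gamma$ for $\gamma < \tfrac15$ is the delicate part; the usual attainment issues for the relevant suprema and infima over the nest are handled, as in the proof of Theorem~\ref{T:close nests}, by passing to strong-operator limits of monotone nets of projections.
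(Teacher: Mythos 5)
The paper does not actually prove Theorem~\ref{T:lance}; it quotes it from \cite{DavFAOA}*{Theorem 13.10.11}, so there is no internal proof to compare against. Judged on its own terms, your proposal is an outline of the standard strategy (close algebras $\Rightarrow$ close nests $\Rightarrow$ dimension-preserving order isomorphism $\Rightarrow$ similarity via \cite{DavNestAlgs}*{Lemma 18.3}), which is indeed the right skeleton, but it has a genuine gap at its central step. The whole content of the theorem is the quantitative comparison you defer to your final paragraph: producing, for a given $N\in\N$ with $\inf_{M\in\M}\|P_N-P_M\|=\delta$, a contraction $T\in\T(\M)$ with $\|P_N^\perp T P_N\|$ bounded below by an explicit function of $\delta$, and hence an explicit increasing $g$ with $d(\M,\N)\le g(\gamma)$. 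You never define $g$, never construct the witness $T$, and you yourself label this step ``the delicate part.'' Without it the argument does not start: Theorem~\ref{T:close nests} and its corollary cannot be invoked because you have not shown $d(\M,\N)<\frac12$, or even $d(\M,\N)<1$. Correctly observing that self-adjoint witnesses $P_M$ are insufficient is not a substitute for exhibiting the non-self-adjoint ones.

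The final arithmetic is also asserted rather than derived. Your chain needs both $g(\gamma)<\frac12$ for $\gamma<\frac15$ and $2g(\gamma)<4\gamma$, i.e.\ $g(\gamma)<2\gamma$. But if $g(\gamma)\le 2\gamma$ were the true bound, the threshold your own argument produces would be $\gamma<\frac14$, not $\frac15$; the appearance of $\frac15$ in the statement signals that the actual estimate relating $d(\M,\N)$ to $d(\T(\M),\T(\N))$ carries a worse constant (something of the shape $2\gamma/(1-\gamma)$), in which case $2g(\gamma)$ exceeds $4\gamma$ and the conclusion $\|S-I\|<4\gamma$ cannot be obtained by naively composing the two bounds as you do. So even granting the missing lemma in the qualitative form you describe, the constants do not close up, and the stated bound on $\|S-I\|$ would require a more careful construction of $S$ than a direct appeal to the $\|S-I\|\le 2\|\theta-\id\|$ estimate.
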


To get a handle on the distance between nest algebras, we make use of the fact that for any $N\in\N$,
$\T(\N)$ contains $P_{N_+} \B(H) P_N^\perp$. In particular, all rank one elements of $\T(\N)$ have this form.
We will write $\zeta \eta^*$ for the rank 1 operator $(\zeta \eta^*) \xi = \ip{\xi,\eta} \zeta$.

\begin{lem} \label {L:lower bound}
Let $\M, \N$ be nests on $\H$. For any $M\in\M$ and $N\in\N$,
\[
 d(\T(\M), \T(\N)) \ge \| P_M^\perp P_{N_+} \| \, \|P_M P_N^\perp \| .
\] 
\end{lem}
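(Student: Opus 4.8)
The plan is to exhibit a specific rank-one operator in $\T(\N)$ whose distance to $\T(\M)$ is at least the claimed quantity, and then invoke the Arveson distance formula. Since $P_{N_+}\B(H)P_N^\perp \subseteq \T(\N)$, for any unit vectors $\zeta \in N_+H \ominus$ (well, more precisely $\zeta$ in the range of $P_{N_+}$) and $\eta$ in the range of $P_N^\perp$, the rank-one operator $\zeta\eta^*$ lies in $\T(\N)$. I would choose $\zeta$ and $\eta$ to witness the norms $\|P_M^\perp P_{N_+}\|$ and $\|P_M P_N^\perp\|$ respectively: that is, take $\zeta$ in the range of $P_{N_+}$ with $\|P_M^\perp \zeta\|$ close to $\|P_M^\perp P_{N_+}\|$, and $\eta$ in the range of $P_N^\perp$ with $\|P_M \eta\|$ close to $\|P_M P_N^\perp\|$. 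Both are unit vectors, so $\zeta\eta^*$ is in the unit ball $b_1(\T(\N))$.

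Next I would estimate $d(\zeta\eta^*, \T(\M))$ from below using Arveson's formula: $d(\zeta\eta^*, \T(\M)) = \sup_{M'\in\M} \|P_{M'}^\perp (\zeta\eta^*) P_{M'}\|$. Evaluating at the particular $M \in \M$ in the statement gives
\[
 d(\zeta\eta^*, \T(\M)) \ge \| P_M^\perp (\zeta\eta^*) P_M \| = \|P_M^\perp \zeta\| \, \|P_M \eta\| ,
\]
since $P_M^\perp(\zeta\eta^*)P_M = (P_M^\perp\zeta)(P_M\eta)^*$ and the norm of a rank-one operator $u v^*$ is $\|u\|\,\|v\|$. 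By the choice of $\zeta$ and $\eta$, the right-hand side is at least $\|P_M^\perp P_{N_+}\|\,\|P_M P_N^\perp\| - \ep$ for any prescribed $\ep > 0$. Since $d(\T(\M),\T(\N)) \ge \sup_{B\in b_1(\T(\N))} d(B,\T(\M)) \ge d(\zeta\eta^*,\T(\M))$, letting $\ep \to 0$ yields the claimed inequality.

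I do not anticipate a serious obstacle here; the only points requiring a little care are (i) confirming that $\zeta\eta^*$ genuinely lies in $\T(\N)$, which is exactly the containment $P_{N_+}\B(H)P_N^\perp \subseteq \T(\N)$ recalled just before the lemma, and (ii) the routine fact that for a rank-one operator, $P_M^\perp (\zeta\eta^*) P_M = (P_M^\perp\zeta)(P_M\eta)^*$ has norm $\|P_M^\perp\zeta\|\,\|P_M\eta\|$. The supremum-versus-maximum subtlety (the norms $\|P_M^\perp P_{N_+}\|$ and $\|P_M P_N^\perp\|$ may not be attained) is handled cleanly by the $\ep$-argument, and in fact one could instead note that $\zeta$ and $\eta$ may be chosen from the ranges of $P_{N_+}$ and $P_N^\perp$ to make the product of norms approach the supremum, so no compactness is needed.
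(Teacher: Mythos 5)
Your proposal is correct and follows essentially the same route as the paper: both choose unit vectors $\zeta \in N_+$ and $\eta \in N^\perp$ approximately witnessing the two norms, observe that $\zeta\eta^* \in b_1(\T(\N))$, and bound $d(\zeta\eta^*,\T(\M))$ below by $\|P_M^\perp \zeta\|\,\|P_M\eta\|$ via the compression to $P_M^\perp(\cdot)P_M$ before letting $\ep \to 0$. The only cosmetic difference is that you invoke the full Arveson distance formula where the paper needs only the trivial one-sided estimate $d(T,\T(\M)) \ge \|P_M^\perp T P_M\|$.
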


\begin{proof}
Let $\ep > 0$.
We may choose a unit vector $\eta \in N^\perp$ so that $\|P_M \eta \| > \|P_M P_N^\perp \| - \ep$.
Likewise choose a unit vector $\zeta \in N_+$ so that $\| P_M^\perp \zeta \| > \| P_M^\perp P_{N_+} \| - \ep$.
The rank one operator $\zeta \eta^*$ lies in $\T(\N)$ and has norm 1.
Therefore
\begin{align*}
 d(\T(\M), \T(\N)) &\ge d( \zeta \eta^*, \T(\M)) \ge \| P_M^\perp \zeta \eta^* P_M \| \\&
 = \| P_M^\perp \zeta \| \,  \|P_M \eta \| > \| P_M^\perp P_{N_+} \| \, \|P_M P_N^\perp \| - 2\ep. 
\end{align*}
As $\ep>0$ is arbitrary, the result follows.
\end{proof}

As $P_N \le P_{N_+}$, the following is immediate.

\begin{cor} \label {C:lower bound}
Let $\M, \N$ be nests on $\H$. For any $M\in\M$ and $N\in\N$,
\[
 d(\T(\M), \T(\N)) \ge \| P_M^\perp P_N \| \, \|P_M P_N^\perp \| .
\] 
\end{cor}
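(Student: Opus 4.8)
The plan is to deduce this directly from Lemma~\ref{L:lower bound}, so the proof will be essentially a one-line observation. Fix $M\in\M$ and $N\in\N$. The quantity we want to bound below, $\|P_M^\perp P_N\|\,\|P_M P_N^\perp\|$, differs from the lower bound already furnished by Lemma~\ref{L:lower bound}, namely $\|P_M^\perp P_{N_+}\|\,\|P_M P_N^\perp\|$, only in the first factor. So it suffices to show $\|P_M^\perp P_N\| \le \|P_M^\perp P_{N_+}\|$, and then chain the inequalities.

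For that first step I would use the containment $N\subseteq N_+$, which gives $P_N \le P_{N_+}$ and hence $P_N = P_{N_+}P_N$. Therefore
\[
 \|P_M^\perp P_N\| = \|P_M^\perp P_{N_+} P_N\| \le \|P_M^\perp P_{N_+}\|\,\|P_N\| = \|P_M^\perp P_{N_+}\|.
\]
Multiplying through by $\|P_M P_N^\perp\| \ge 0$ and invoking Lemma~\ref{L:lower bound} yields
\[
 d(\T(\M),\T(\N)) \ge \|P_M^\perp P_{N_+}\|\,\|P_M P_N^\perp\| \ge \|P_M^\perp P_N\|\,\|P_M P_N^\perp\|,
\]
which is exactly the claimed inequality.

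There is no real obstacle here: the only point requiring any thought is recognizing that $N_+$ may strictly exceed $N$ (when $N$ has an atom above it), so one genuinely does need the monotonicity estimate rather than an equality — but that estimate is the trivial submultiplicativity of the operator norm applied to $P_M^\perp P_{N_+} P_N$. No appeal to the Halmos model, index theory, or the Similarity theorem is needed; this is purely a bookkeeping corollary of the preceding lemma.
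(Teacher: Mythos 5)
Your proof is correct and matches the paper's own argument: the paper deduces the corollary from Lemma~\ref{L:lower bound} by the one-line remark that $P_N \le P_{N_+}$, which is precisely the monotonicity step you spell out via $P_N = P_{N_+}P_N$ and submultiplicativity of the norm.
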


We can now make the following improvement to Theorem~\ref{T:lance}, pushing $\gamma$ to the optimal $\gamma < 1$.

\begin{thm} \label {T:dist algs}
Let $\M,\N$ be nests on separable Hilbert space $\H$.
If 
\[
 d(\T(\M), \T(\N)) < 1 ,
\]
then $d(\M,\N) < 1$; and so $\T(\M)$ and $\T(\N)$ are similar.
\end{thm}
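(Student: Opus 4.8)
The plan is to show the contrapositive-flavored implication: assuming $d(\T(\M),\T(\N)) < 1$, I will produce for each $M \in \M$ a nearby $N \in \N$ with $\|P_M - P_N\| < 1$, and symmetrically, so that $d(\M,\N) < 1$; the similarity conclusion then follows immediately from Theorem~\ref{T:close nests} and the Similarity theorem. Set $\gamma := d(\T(\M),\T(\N)) < 1$. Fix $M \in \M$ and write $P := P_M$. The natural candidate for the nearby element of $\N$ is defined spectrally by the action of the other nest on the vector $P$: concretely, the function $N \mapsto \|P_M^\perp P_N\|$ is nondecreasing from $0$ to $\|P_M^\perp\|$ as $N$ runs up $\N$, and $N \mapsto \|P_M P_N^\perp\|$ is nonincreasing; I would let $N$ be the element of $\N$ at which these two quantities ``cross,'' i.e. the supremum of those $N' \in \N$ with $\|P_M^\perp P_{N'}\| \le$ some threshold, taking advantage of completeness of $\N$ under intersections and spans.

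The heart of the argument is Corollary~\ref{C:lower bound}, which gives $\|P_M^\perp P_N\|\,\|P_M P_N^\perp\| \le \gamma < 1$ for \emph{every} $N \in \N$. Using Proposition~\ref{P:dist betw projs}, $\|P_M - P_N\| = \max\{\|P_M P_N^\perp\|, \|P_M^\perp P_N\|\}$, so it suffices to find one $N$ for which \emph{both} factors $\|P_M^\perp P_N\|$ and $\|P_M P_N^\perp\|$ are strictly less than $1$. If no such $N$ existed, then at the crossing point $N$ one of the two monotone quantities would jump from a value $\le 1$ on one side to the value $1$ (attained in the limit from the other side) — i.e., there would be elements $N_- \le N \le N_+$ of $\N$ with $\|P_M P_{N_-}^\perp\|$ close to $1$ and $\|P_M^\perp P_{N_+}\|$ close to $1$ simultaneously. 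But then Lemma~\ref{L:lower bound}, applied with $N_-$ in place of $N$ (so that $(N_-)_+ \le N_+$), would force $d(\T(\M),\T(\N)) \ge \|P_M^\perp P_{(N_-)_+}\|\,\|P_M P_{N_-}^\perp\|$ arbitrarily close to $1$, contradicting $\gamma < 1$. This is exactly why the sharpened Lemma~\ref{L:lower bound} (with $N_+$ rather than $N$) is needed rather than merely Corollary~\ref{C:lower bound}: it lets one ``straddle'' the crossing point.

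I expect the main obstacle to be the careful handling of the order-continuity of $N \mapsto \|P_M P_N^\perp\|$ and $N \mapsto \|P_M^\perp P_N\|$ at the crossing point — in particular verifying left/right limits along the nest and ensuring the candidate $N$ is genuinely a member of $\N$ (using that nests are complete lattices), and then combining the two one-sided estimates with Lemma~\ref{L:lower bound} to extract the strict inequality. Once the one-sided estimates are in hand, the contradiction argument is short. Having shown $\sup_{M \in \M}\inf_{N \in \N}\|P_M - P_N\| < 1$, the symmetric bound follows by interchanging the roles of $\M$ and $\N$ (the distance $d(\T(\M),\T(\N))$ is symmetric), giving $d(\M,\N) < 1$; Theorem~\ref{T:close nests} then yields a dimension-preserving order isomorphism $\theta:\M\to\N$, and the Similarity theorem provides the invertible $S$ with $S\M = \N$, hence $S\T(\M)S^{-1} = \T(\N)$.
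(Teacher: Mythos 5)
Your overall strategy---the product bound of Corollary~\ref{C:lower bound}, the crossing-point supremum inside $\N$, and Lemma~\ref{L:lower bound} to straddle that point---is exactly the machinery of the paper's proof (which runs the contrapositive), and your fixed-$M$ contradiction argument can be made to work. But there is a genuine gap at the final step. What your argument establishes is that for \emph{each} $M\in\M$ there exists $N\in\N$ with $\|P_M-P_N\|<1$; you then assert $\sup_{M}\inf_{N}\|P_M-P_N\|<1$, which does not follow. The infima could all be strictly less than $1$ while their supremum equals $1$ (say $\|P_{M_i}-P_{N_i}\|=t_i<1$ with $\sup_i t_i=1$), and nothing in your sketch excludes this. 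The paper rules it out as a separate case using the ``moreover'' clause of Proposition~\ref{P:dist betw projs}: when $\|P_{M_i}-P_{N_i}\|=t_i<1$, \emph{both} factors $\|P_{M_i}^\perp P_{N_i}\|$ and $\|P_{M_i}P_{N_i}^\perp\|$ equal $t_i$, so Corollary~\ref{C:lower bound} gives $d(\T(\M),\T(\N))\ge\sup_i t_i^2=1$. Alternatively, you could repair your own route by fixing the unspecified threshold at $\sqrt\gamma$: with $N_0=\bigvee\{N:\|P_M^\perp P_N\|\le\sqrt\gamma\}$ one gets $\|P_M^\perp P_{N_0}\|\le\sqrt\gamma$ by strong convergence, every $N>N_0$ has $\|P_MP_N^\perp\|\le\sqrt\gamma$ by the product bound, and the straddle with Lemma~\ref{L:lower bound} produces a single $N\in\{N_0,N_{0+}\}$ with $\|P_M-P_N\|\le\sqrt\gamma$, a bound uniform in $M$ (indeed this gives $d(\M,\N)\le\sqrt{d(\T(\M),\T(\N))}$).

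A second, smaller flaw is in the straddling step itself: you argue that $(N_-)_+\le N_+$ lets you transfer ``$\|P_M^\perp P_{N_+}\|$ close to $1$'' to $\|P_M^\perp P_{(N_-)_+}\|$, but monotonicity runs the wrong way, since $P_{(N_-)_+}\le P_{N_+}$ only gives $\|P_M^\perp P_{(N_-)_+}\|\le\|P_M^\perp P_{N_+}\|$. The correct mechanism (as in the paper) is that, under the contradiction hypothesis $\max\{\|P_M^\perp P_N\|,\|P_MP_N^\perp\|\}=1$ for all $N$, every $N>N_0$ has $\|P_MP_N^\perp\|$ small and hence $\|P_M^\perp P_N\|=1$ exactly; letting $N$ decrease to $N_{0+}$ in the strong operator topology yields $\|P_MP_{N_{0+}}^\perp\|$ small, hence $\|P_M^\perp P_{N_{0+}}\|=1$ and $N_{0+}>N_0$, and then Lemma~\ref{L:lower bound} applied at $N_0$ gives $d(\T(\M),\T(\N))\ge\|P_M^\perp P_{N_{0+}}\|\,\|P_MP_{N_0}^\perp\|=1$.
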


\begin{proof}
For the contrapositive, we suppose that $d(\M,\N) = 1$ and show that $d(\T(\M), \T(\N)) = 1$.

There are two cases to consider. In the first case, there are projections $M_i\in\M$ and $\N_i \in \N$
such that $\| P_{M_i} - P_{N_i} \| = t_i < 1$ for $i\ge1$ with $\sup t_i = 1$.
In this case, Proposition~\ref{P:dist betw projs} shows that $\| P_{M_i}^\perp P_{N_i} \| = \| P_{M_i} P_{N_i}^\perp \| = t_i$.
Thus by Corollary~\ref{C:lower bound},
\[
 1 \ge d(\T(\M), \T(\N)) \ge \sup_{i\ge 1} \| P_{M_i}^\perp P_{N_i} \| \, \| P_{M_i} P_{N_i}^\perp \| = \sup_{i\ge1} t_i^2 = 1.
\]

In the second case, there is a projection in one nest at distance 1 from the other.
Interchanging the two nests if required, we may suppose that there is $M \in \M$ such that $\| P_M - P_N \| = 1$ for all $N\in \N$.
Let 
\[
 \delta = \sup_{N\in \N} \min \big\{ \| P_M^\perp P_N \| , \|P_M P_N^\perp \| \big\} .
\]
Again Corollary~\ref{C:lower bound} and Proposition~\ref{P:dist betw projs} show that $d(\T(\M), \T(\N)) \ge \delta$.
Hence if $\delta = 1$, we are done.

So we assume that $\delta < 1$.
If $\| P_M^\perp P_N \| \le \delta$, the same holds for all $N'<N$ in $\N$.
Define $N_0 = \bigvee \{ N : \| P_M^\perp P_N \| \le \delta \}$.
There are $N_i$ increasing to $N_0$ with $\| P_M^\perp P_{N_i} \| \le \delta$.
Since $P_{N_i}$ converges in the strong operator topology to $P_{N_0}$, we have $\| P_M^\perp P_{N_0} \| \le \delta$.
That is, $N_0$ is the largest element of $\N$ with this property. Hence $\|P_M P_{N_0}^\perp \| = 1$.

If $N>N_0$, then $\| P_M^\perp P_N \| = 1$, and thus $ \|P_M P_N^\perp \| \le \delta$.
Recall that $N_{0+} = \bigwedge \{ N \in \N : N > N_0 \}$.
Arguing as above, we obtain $ \|P_M P_{N_{0+}}^\perp \| \le \delta$.
Thus $\| P_M^\perp P_{N_{0+}} \| = 1$. This shows that $P_{N_{0+}} > P_{N_0}$.
Now an application of Lemma~\ref{L:lower bound} shows that 
\[
 1 \ge d(\T(\M), \T(\N)) \ge \| P_M^\perp P_{N_{0+}} \| \, \|P_M P_{N_0}^\perp \| = 1 . \qedhere
\]
\end{proof}

Lance \cite{Lance} showed that if $d(\M,\N) = \gamma < \frac12$, then $d(\T(\M), \T(\N)) \le 2 \gamma < 1$.
We now provide an elementary example to show that it is possible to have $\gamma < 1$ but $d(\T(\M), \T(\N)) = 1$.

\begin{ex}
Let $\frac1{\sqrt2} \le s < 1$ and set $c = \sqrt{1-s^2}$. Let $\H = \bC^2$.  
Let $M= \bC \begin{sbmatrix} 1\\0 \end{sbmatrix}$ and $N = \bC \begin{sbmatrix} c\\s \end{sbmatrix}$
and define nests $\M = \big\{ \{0\}, M , \H \big\}$ and $\N = \big\{ \{0\}, N , \H \big\}$.
Then $\T(\M)$ is the algebra of $2\times2$ upper triangular matrices in the standard basis.
Using the calculation in Lemma~\ref {L:dist to pisom}, 
\[
 d(\M,\N) = \| P_M - P_N \| = s < 1 .
\]

For $0 \le a \le 1$, consider $T = \begin{bmatrix} a & 1-a^2 \\ 0 & -a \end{bmatrix}$ in $\T(\M)$.
One sees that
\[
 \|T\| = \left\| \begin{bmatrix} a & 1-a^2 \\ 0 & a \end{bmatrix} \right\|
 = \left\| \begin{bmatrix} 0 & a \\ a & 1-a^2 \end{bmatrix} \right\| = 1
\]
because the last matrix, which is self-adjoint, has characteristic polynomial $\lambda^2 - (1-a^2) \lambda - a^2 = (\lambda - 1)(\lambda + a^2)$.
We compute 
\begin{align*}
 d(T, \T(\N)) &= \| P_N^\perp T P_N \| = \big| \bip{ T \begin{sbmatrix} c\\s \end{sbmatrix}, \begin{sbmatrix} s\\ -c \end{sbmatrix} } \big| 
 = 2acs + (1-a^2) s^2 .
\end{align*}
A bit of calculus shows that we should choose $a = \frac c s$, which yields
\[
 d(T, \T(\N)) = 2c^2 + s^2 - c^2 = c^2 + s^2 = 1.
\]
Therefore $d(\T(\M), \T(\N)) = 1$.
\end{ex}

\begin{rem}
In the example above, when $0 < s < \frac1{\sqrt2}$, we can choose $a=1$ to see that $d(\T(\M), \T(\N)) \ge 2cs$.
For very small $s$, this shows that the constant 2 in Lance's estimate cannot be improved.
\end{rem}


\end{document}